\documentclass[pdflatex,sn-mathphys-num]{sn-jnl}

\usepackage{graphicx}%
\usepackage{multirow}%
\usepackage{amsmath,amssymb,amsfonts}%
\usepackage{amsthm}%
\usepackage{mathrsfs}%
\usepackage[title]{appendix}%
\usepackage{xcolor}%
\usepackage{hyperref}
\usepackage{cleveref} 
\usepackage{textcomp}%
\usepackage{manyfoot}%
\usepackage{booktabs}%
\usepackage{algorithm}%
\usepackage{algorithmicx}%
\usepackage{algpseudocode}%
\usepackage{listings}%

\theoremstyle{thmstyleone}%
\newtheorem{theorem}{Theorem}
\newtheorem{proposition}[theorem]{Proposition}%
\newtheorem{lemma}[theorem]{Lemma}%

\theoremstyle{thmstyletwo}%
\newtheorem{remark}{Remark}%

\theoremstyle{thmstylethree}%
\newtheorem{definition}{Definition}%

\begin{document}

\title[On the Ricci symmetries of a K\"ahler manifold]{On the Ricci symmetries of a K\"ahler manifold}


\author*[1,2]{\fnm{Jorge} \sur{Alc\'azar}}\email{f82algoj@uco.es}

\affil*[1]{\orgdiv{Department of Mathematics}, \orgname{University of C\'ordoba}, \orgaddress{\street{Campus de Rabanales}, \city{C\'ordoba}, \postcode{14071}, \country{Spain}}}


\abstract{The main purpose of the present paper is to investigate the symmetry properties of a K\"ahler manifold involving the Ricci tensor. In this context, the most symmetric manifolds are K\"ahler--Einstein spaces, and their natural generalizations are Ricci-parallel K\"ahler manifolds, Ricci-semisymmetric K\"ahler manifolds and holomorphically Ricci-pseudosymmetric K\"ahler manifolds. Unlike their Riemannian counterparts, we prove that all these conditions also admit a characterization solely in terms of holomorphic planes, analogously to the symmetries related to the Riemannian curvature tensor in K\"ahler manifolds. A key finding is that the concept of holomorphic Ricci pseudosymmetry is distinct from the classical Ricci-pseudosymmetric condition introduced by Deszcz. By carefully analyzing the interplay between these definitions, we clarify the precise geometric role of the so-called Ricci curvature of Deszcz. Additionally, we also present a geometric interpretation of the complex Tachibana--Ricci tensor and we establish a new criterion for a K\"ahler manifold to be Einstein based on holomorphic planes.}

\keywords{K\"ahler--Einstein space, Ricci semisymmetry, Ricci pseudosymmetry, holomorphic Ricci pseudosymmetry, complex Tachibana--Ricci tensor}

\pacs[MSC Classification]{53C55, 53C35, 53A55, 53B35, 32Q15.}



\maketitle

\section{Introduction}\label{sec1}

Let $(M,g)$ be an $n$-dimensional Riemannian manifold with metric $g$. We denote by $\nabla$ the Levi--Civita connection and by $R$ its associated $(1,3)$-Riemann--Christoffel curvature tensor. The endomorphisms $X\wedge_{g} Y$ and $R(X,Y)$ of the Lie algebra of vector fields $\mathfrak{X}(M)$ on $M$ are defined by
\begin{equation}
	(X\wedge_{g} Y)Z=g(Y,Z)X-g(X,Z)Y,
\end{equation}
and
\begin{equation}
	R(X,Y)Z=\nabla_{X} \nabla_{Y}Z-\nabla_{Y} \nabla_{X}Z-\nabla_{[X,Y]}Z,
\end{equation}
for any vector fields $X,Y,Z \in \mathfrak{X}(M)$. 

At a point $p\in M$, let $\pi=x\wedge y$ be a tangent plane to $M$ at $p$, spanned by two vectors $x$ and $y$. Then, the real number
\begin{equation}
	K(p,\pi)=\dfrac{g(R(x,y)y,x)}{g((x\wedge_{g}y)y,x)},
\end{equation}
depends only on the point $p$ and the plane $\pi$, and is called the sectional curvature of $M$ at $p$ for the plane section $\pi$.

Given an orthonormal basis $\{E_{1},\ldots, E_{n}\}$, the Ricci tensor on a Riemannian manifold is given by
\begin{equation*}
	S(X,Y)=\sum_{i=1}^{n}R(E_{i},X,Y,E_{i}),
\end{equation*}
for any $X,Y\in \mathfrak{X}(M)$, and the scalar curvature is defined as the trace of the Ricci tensor
\begin{equation}
	\text{Scal}=\text{tr}_{g}S.
\end{equation}

At a point $p\in M$, let $\{e_{1},e_{2},\ldots,e_{n}\}$ be an orthonormal basis of $T_{p}M$. The Ricci curvature in the direction of $e_{1}$, $\operatorname{Ric}(e_{1})$, can be written as
\begin{equation}
	\operatorname{Ric}(e_{1})=\sum_{j=2}^{n}K(p,e_{1}\wedge e_{j}).
\end{equation}

A Riemannian manifold $(M,g)$ is said to be \textit{locally Ricci flat} when its Ricci tensor vanishes. The simplest non-Ricci flat Riemannian manifolds are the \textit{spaces of constant Ricci curvatures}, also known as \textit{Einstein spaces} \cite{JHSV}, for which the Ricci tensor takes the form:
\begin{equation}
	S=\lambda g.
\end{equation}

Note that every Ricci flat manifold is Einstein, however, the converse is not true. From the works of Cartan \cite{Cartan}, as a generalization of these, \textit{Ricci-parallel spaces} appear as those such that $\nabla S=0$, characterized by having constant scalar curvature. 

Going one step further we can observe that the integrability condition of $\nabla S =0$ is $R\cdot S=0$. Therefore, every Ricci-parallel space also satisfies $R\cdot S=0$. This fact allows us to define \textit{Ricci-semisymmetric spaces} as those Riemannian manifolds satisfying $R\cdot S=0$. The class of Ricci-semisymmetric spaces includes the class of Ricci-parallel spaces as a proper subset. However, in general the converse is not true (see, for example, \cite{AD}). We recall at this point that $R\cdot S$ is the $(0,4)$-tensor on $M$ given by
\begin{equation}
	\begin{split}
		&(R\cdot S)(X_{1},X_{2};X,Y)=(R(X,Y)\cdot S)(X_{1},X_{2})\\
		&=-S(R(X,Y)X_{1},X_{2})-S(X_{1},R(X,Y)X_{2}),
	\end{split}
\end{equation}
for any vector fields $X_{1},X_{2},X,Y \in \mathfrak{X}(M)$. 

As was proved in \cite[Corollary 3]{JHSV} Ricci-semisymmetric manifolds are characterized as those manifolds whose Ricci curvature function is invariant, up to second order terms, under parallel transport of any vector $v$ at any point $p$ in $M$ around any infinitesimal coordinate parallelogram $\mathcal{P}$ cornered at $p$. It is immediate to observe that any Riemannian surface satisfies $R\cdot S=0$, so the concept of Ricci semisymmetry becomes relevant in the case of Riemannian manifolds $(M^{n},g)$ of dimension $n\geq 3$. Note that every semisymmetric manifold is Ricci-semisymmetric. The converse, however, is not true in general \cite{JHSV}.

As a natural generalization of these latter spaces, \textit{Ricci-pseudosymmetric spaces in the sense of Deszcz} \cite{Olszak} are defined as Riemannian manifolds $(M^{n},g)$ of dimension $n\geq 3$ for which
\begin{equation}
	R\cdot S=L_{S} \hspace{0.1cm}Q(g,S),
\end{equation}
where $L_{S}\in \mathcal{C}^{\infty}(M)$ and $Q(g,S)$ is the \textit{Tachibana--Ricci tensor} defined by
\begin{equation}
	\begin{split}
		Q(g,S)(X_{1},X_{2};X,Y)=&((X\wedge_{g}Y)\cdot S)(X_{1},X_{2})\\
		=&-S((X\wedge_{g}Y)X_{1},X_{2})-S(X_{1},(X\wedge_{g}Y)X_{2}),
	\end{split}
\end{equation}
for any vector fields $X_{1},X_{2},X,Y \in \mathfrak{X}(M)$. The Tachibana--Ricci tensor is the simplest $(0,4)$-tensor with the same algebraic symmetries as $R\cdot S$.

\begin{table}[h]
	\caption{Natural Ricci symmetries on a Riemannian manifold.}
	\label{tab:ricci-symmetries}
	\begin{tabular}{ccc}
		\toprule
		Ricci flat & $\qquad$ & $S=0$\\
		\midrule
		Einstein & & $S=\lambda g$\\
		\midrule
		Ricci-parallel & & $\nabla S =0$\\
		\midrule
		Ricci-semisymmetric & & $R\cdot S =0$\\
		\midrule
		Deszcz Ricci-pseudosymmetric & & $R\cdot S = L \hspace{0.1cm}Q(g,S)$\\
		\botrule
	\end{tabular}
\end{table}

A Riemannian manifold is Einstein if and only if its Tachibana--Ricci tensor vanishes identically, see \cite[Lemma 4]{JHSV}. Therefore, if $(M^{n},g)$, $n\geq 3$, is not Einstein, the set of points where the Tachibana--Ricci tensor does not vanish identically is a non-empty open subset $\mathcal{U}\subseteq M$. Given a point $p\in \mathcal{U}$, a direction $d$, generated by a vector $v\in T_{p}M$, is said to be curvature dependent on a plane $\bar{\pi}=x\wedge y\subset T_{p}M$ if $Q(g,S)(v,v;x,y)\neq 0$. In this context, given $p\in \mathcal{U}$ and one direction $d$, generated by a vector $v\in T_{p}M$, curvature dependent on a plane $\bar{\pi}=x\wedge y\subset T_{p}M$, the \textit{Ricci curvature of Deszcz} $L_{S}(p,d,\bar{\pi})$ of the direction $d$ and the plane $\bar{\pi}$ is defined as the scalar
\begin{equation} \label{RCD}
	L_{S}(p,d,\bar{\pi})=\dfrac{(R\cdot S)(v,v;x,y)}{Q(g,S)(v,v;x,y)}.
\end{equation}
The above definitions are independent of the choice of bases for the direction $d$ and the plane $\bar{\pi}$.

Given $p\in M$ and a direction $d$, generated by a vector $v\in T_{p}M$, curvature dependent on a plane $\bar{\pi}=x\wedge y$, $Q(g,S)(v,v;x,y)$ measures the change of the Ricci curvature, $\operatorname{Ric}(v)$, under an operation involving infinitesimal rotations around a point $p$, without leaving this point. On the other hand, $(R\cdot S)(v,v;x,y)$ measures the change of the Ricci curvature, $\operatorname{Ric}(v)$, after parallel transport of the vector $v$ around any infinitesimal coordinate parallelogram $\mathcal{P}$ cornered at any point $p\in M$ \cite{JHSV}.

It is obvious that if $(M,g)$ is a Ricci-pseudosymmetric manifold in the sense of Deszcz, all its Ricci curvatures of Deszcz are independent of the directions and planes. Moreover, this is a sufficient condition: a Riemannian manifold $M$ of dimension $n\geq 3$ is Ricci-pseudosymmetric in the sense of Deszcz if and only if, at each point $p\in \mathcal{U}$, for every direction $d$, given by a vector $v$, and every plane $\pi$ in $T_{p}M$, such that the direction $d$ is curvature dependent on the plane $\pi$, $L_{S}(p,d,\pi)=L_{S}(p)$ for some function $L_{S}\in \mathcal{C}^{\infty}$ \cite[Theorem 10]{JHSV}. Note that while every pseudosymmetric manifold in the sense of Deszcz is Ricci-pseudosymmetric, the converse is not true \cite{JHSV}.

Throughout this work we will consider K\"ahler manifolds $(M^{2n},g,J)$ satisfying different symmetries. Specifically, we will study the symmetries related to Einstein conditions, Ricci-parallel, Ricci-semisymmetric, and holomorphically Ricci-pseudosymmetric. It should be noted in this regard that the tensors $R\cdot S$ and $Q(g,S)$ do not present the same symmetries and properties when considering the complex structure $J$. For this reason, in 1989 Olszak proposed an alternative concept of pseudosymmetry for K\"ahler manifolds \cite{OlszakB}. We will refer to this concept of Ricci pseudosymmetry as \textit{holomorphic Ricci pseudosymmetry}.

The present paper is organized as follows. In Section~\ref{sec:setup}, some basic notions for K\"ahler manifolds will be recalled and two new algebraic results concerning the symmetries of $(0,4)$-tensor will be presented: Lemma~\ref{lem:simRSJ} and Proposition~\ref{prop:auxalg}. In Section~\ref{sec:complexRTach}, the complex Tachibana--Ricci tensor is defined and, analogous to the Riemannian case, a geometric interpretation will be given. This tensor allows us to give a characterization result for K\"ahler--Einstein manifolds in terms of holomorphic planes: Theorem~\ref{thm:Einsteinchac}. In Section~\ref{sec:Ricciparsem}, characterization results for Ricci-parallel and Ricci-semisymmetric K\"ahler manifolds are provided: Theorem~\ref{thm:Riccipar} and Theorem~\ref{thm:Riccisem}. Finally, Section~\ref{sec:HRPS} is dedicated to holomorphically Ricci-pseudosymmetric K\"ahler manifolds. The relationships between holomorphic Ricci pseudosymmetry, Ricci pseudosymmetry in the sense of Deszcz and Ricci curvature of Deszcz are studied. In particular, a characterization result for holomorphically Ricci-pseudosymmetric K\"ahler manifolds is provided in terms of Ricci curvatures: Theorem~\ref{thm:HRpseud}.

\section{Set up} \label{sec:setup}

A K\"ahler manifold is a complex manifold $(M^{2n},g,J)$ of real dimension $2n$, where $g$ is a Hermitian metric on $M$ and the complex structure $J$ is parallel; that is, $\nabla J=0$. Given a K\"ahler manifold $(M,g,J)$, it is possible to define the complex metric endomorphism by
\begin{equation} \label{EMC}
	(X\wedge^{c}_{g}Y)Z=(X\wedge_{g}Y)Z+(JX\wedge_{g}JY)Z-2g(JX,Y)JZ,
\end{equation}
for any $X,Y,Z \in \mathfrak{X}(M)$. The simplest non-Ricci flat K\"ahler manifolds are K\"ahler--Einstein manifolds, for which the Ricci tensor takes the form
\begin{equation}
	S(X,Y)=\dfrac{\widetilde{c}}{2}(n+1)g(X,Y),
\end{equation}
for any $X,Y \in \mathfrak{X}(M)$, with $n$ the complex dimension of the manifold (see, for instance, \cite[Theorem 7.5, Chapter IX]{KN_1969}).
\begin{remark}
	Note that every complex space form is a K\"ahler--Einstein manifold, but the converse is not true. See, for instance, $K3$ surfaces \cite{Siu} endowed with a Ricci-flat K\"ahler metric. In this case, $\lambda=0$, and consequently $\widetilde{c}=0$. A K\"ahler manifold with constant holomorphic sectional curvature $\widetilde{c}=0$ is flat, i.e, locally isometric to $\mathbb{C}^{2}$ with the Euclidean metric. A compact flat K\"ahler manifold, $X$, is finitely covered by a complex torus $\mathbb{C}^{2}/ \varLambda$, for which the second Chern class is $c_{2}(X)=0$ \cite[Corollary 11.27]{Besse}. However, a $K3$ surface, $Y$, cannot be flat, because $c_{2}(Y)=24\neq 0$ \cite{BV}.
\end{remark}

We begin by recalling some basic properties of the Ricci tensor involving the complex structure $J$ on a K\"ahler manifold \cite{Olszak}.
\begin{lemma} \label{lem:simSJ}
	The Ricci tensor of a K\"ahler manifold satisfies the following properties
	\begin{itemize}
		\item [a)] $S(X,Y)=S(Y,X)$,
		\item [b)] $S(X,JX)=0$,
		\item [c)] $S(JX,JY)=S(X,Y)$,
		\item [d)] $S(X,JY)=-S(JX,Y)$,
	\end{itemize}
	for any $X,Y \in \mathfrak{X}(M)$.
\end{lemma}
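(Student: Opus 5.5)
Property a) follows directly from the definition $S(X,Y)=\sum_{i}R(E_{i},X,Y,E_{i})$ together with the pair symmetry $R(X,Y,Z,W)=R(Z,W,X,Y)$ of the Riemann--Christoffel curvature tensor. Writing $R(E_i,X,Y,E_i)=R(Y,E_i,E_i,X)=R(E_i,Y,X,E_i)$ (applying pair symmetry and then the skew-symmetry in each pair twice), each summand is symmetric in $X,Y$. This gives $S(X,Y)=S(Y,X)$.

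The core step is c). Since $\nabla J=0$, the curvature operator commutes with $J$, that is $R(X,Y)JZ=JR(X,Y)Z$. Because $g$ is Hermitian, this gives $R(X,Y,JZ,JW)=R(X,Y,Z,W)$. By pair symmetry we also get $R(JX,JY,Z,W)=R(X,Y,Z,W)$. Now I use that if $\{E_i\}$ is an orthonormal basis then so is $\{JE_i\}$, since $J$ is a $g$-isometry. This yields
\begin{equation*}
S(JX,JY)=\sum_i R(E_i,JX,JY,E_i)=\sum_i R(JE_i,JX,JY,JE_i).
\end{equation*}
In the summand $R(JE_i,JX,JY,JE_i)$, I apply the identity $R(JA,JB,C,D)=R(A,B,C,D)$ to the first pair, with $A=E_i$, $B=X$. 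This gives $R(E_i,X,JY,JE_i)$. Summing over $i$ then produces $\sum_i R(E_i,X,JY,JE_i)$. To finish, I use that $R(A,B,JC,JD)=R(A,B,C,D)$ only holds when both arguments of a pair are rotated. So I rewrite the summand via pair symmetry as $R(JY,JE_i,E_i,X)=R(Y,E_i,E_i,X)$. Summing gives $\sum_i R(E_i,Y,X,E_i)=S(Y,X)$, and by a) this equals $S(X,Y)$.

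This reshuffling is the only delicate bookkeeping, and I expect it to be the main obstacle. It consists of choosing the rotated basis and pairing the $J$'s correctly.

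Property d) then follows from c) by replacing $Y$ with $JY$ and using $J^2=-I$. Indeed, $S(JX,J^2Y)=S(X,JY)$ gives $-S(JX,Y)=S(X,JY)$. Finally, b) follows by setting $Y=X$ in d), which gives $S(X,JX)=-S(JX,X)$. By a) this equals $-S(X,JX)$, hence $S(X,JX)=0$.
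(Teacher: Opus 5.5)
Your proof is correct. The paper gives no argument of its own for this lemma; it only recalls these identities from \cite{Olszak}. Your derivation is therefore a self-contained justification of what the paper takes from the literature: pair symmetry for a), $J$-invariance of both curvature pairs (from $\nabla J=0$ and the Hermitian metric) together with the rotated orthonormal frame $\{JE_i\}$ for c), and then d) and b) as formal consequences.

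One small simplification is available. In the summand $R(E_i,X,JY,JE_i)$, the second pair $(JY,JE_i)$ already has both entries rotated. So $R(A,B,JC,JD)=R(A,B,C,D)$ gives $R(E_i,X,Y,E_i)$ immediately, and hence $S(JX,JY)=S(X,Y)$, without the pair-symmetry detour or any appeal to a).
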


Moreover, the Ricci 2-form $\rho$, $\rho(X,Y)=S(X,JY)$, is closed, and consequently
\begin{equation*}
	(\nabla_{X}S)(Y,JZ)+(\nabla_{Y}S)(Z,JX)+(\nabla_{Z}S)(X,JY)=0,
\end{equation*}
for any $X,Y,Z \in \mathfrak{X}(M)$ \cite{Olszak}.

When studying further the symmetries of any K\"ahler manifold, or in general of any Riemannian manifold, the tensor $R\cdot S$ is essential. In addition to the symmetries inherent to the Riemannian case (see, for example, \cite{JHSV}), in the case of a K\"ahler manifold it behaves well with respect to $J$. Taking into account the behavior of $S$ with respect to $J$, we have the following lemma.
\begin{lemma} \label{lem:simRSJ}
	The tensor $R\cdot S$ of a K\"ahler manifold $(M,g,J)$ satisfies
	\begin{equation*}
		(R\cdot S)(JX_{1},JX_{2};X,Y)=(R\cdot S)(X_{1},X_{2};JX,JY)=(R\cdot S)(X_{1},X_{2};X,Y).
	\end{equation*}
\end{lemma}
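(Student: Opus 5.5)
We verify the two equalities separately, using only the definition
\[
(R\cdot S)(X_{1},X_{2};X,Y)=-S(R(X,Y)X_{1},X_{2})-S(X_{1},R(X,Y)X_{2}),
\]
Lemma~\ref{lem:simSJ}, and two standard consequences of $\nabla J=0$ that hold on any K\"ahler manifold. First, $R(X,Y)$ commutes with $J$:
\[
R(X,Y)JZ=JR(X,Y)Z.
\]
Second, $R(JX,JY)=R(X,Y)$ as endomorphisms.

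The first fact follows directly from the definition of $R$, since $\nabla J=0$ lets $J$ pass through each covariant derivative. For the second, write $R(X,Y,Z,W)=g(R(X,Y)Z,W)$ and use the pair symmetry $R(X,Y,Z,W)=R(Z,W,X,Y)$. Then $J$-invariance of $R$ in the last two slots, which follows from commutation together with the Hermitian property of $g$, transfers to the first two slots. I would state these briefly, since they are classical.

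\textbf{Equality in the first two arguments.} Expand
\[
(R\cdot S)(JX_{1},JX_{2};X,Y)=-S(R(X,Y)JX_{1},JX_{2})-S(JX_{1},R(X,Y)JX_{2}).
\]
Commuting $R(X,Y)$ with $J$ rewrites the first term as $-S(JR(X,Y)X_{1},JX_{2})$. By Lemma~\ref{lem:simSJ}\,c) this equals $-S(R(X,Y)X_{1},X_{2})$. The second term is handled in the same way, so the whole expression reduces to $(R\cdot S)(X_{1},X_{2};X,Y)$.

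\textbf{Equality in the last two arguments.} Since $R(JX,JY)=R(X,Y)$ as endomorphisms, substituting into the definition gives the identity immediately.

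The only step needing care is justifying $R(JX,JY)=R(X,Y)$ from the pair symmetry. This is standard, so I expect no real obstacle; the whole proof is a short direct computation.
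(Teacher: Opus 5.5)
Your proof is correct and follows the paper's approach. The paper also derives the lemma directly from the definition of $R\cdot S$, using the K\"ahler curvature identities $R(X,Y)\circ J=J\circ R(X,Y)$ and $R(JX,JY)=R(X,Y)$ (which it cites from Kobayashi--Nomizu, Prop.~4.5, Ch.~IX) together with Lemma~\ref{lem:simSJ}\,c).
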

\begin{proof}
	The proof follows immediately from the definition of $R\cdot S$ and taking into account \cite[Prop. 4.5, Chap. IX]{KN_1969} and Lemma \ref{lem:simSJ}.
\end{proof}

Thanks to the following result, $(0,4)$-tensors on a K\"ahler manifold satisfying the same properties as $R\cdot S$ are characterized by their behavior when applied to holomorphic planes:

\begin{proposition} \label{prop:auxalg}
	Let $V^{2n}$ be a real vectorial space endowed with a complex structure $J$ and let $R$ and $T$ be two $(0,4)$-tensors on $V^{2n}$ satisfying the following properties:
	\begin{itemize}
		\item [a)] $R(x_{1},x_{2},x_{3},x_{4})=-R(x_{1},x_{2},x_{4},x_{3})$,
		\item [b)] $R(x_{1},x_{2},x_{3},x_{4})=R(x_{2},x_{1},x_{3},x_{4})$,
		\item [c)] $R(x_{1},x_{2},Jx_{3},Jx_{4})=R(Jx_{1},Jx_{2},x_{3},x_{4})=R(x_{1},x_{2},x_{3},x_{4})$,
		\item [d)] $R(x_{1},Jx_{2},x_{3},x_{4})=-R(Jx_{1},x_{2},x_{3},x_{4})$,
		\item [e)] $R(x_{1},x_{2},x_{3},Jx_{4})=-R(x_{1},x_{2},Jx_{3},x_{4})$,
	\end{itemize}
	for any $x_{1},x_{2},x_{3},x_{4} \in V $. If for every $u,v \in V$ we have
	\begin{equation*}
		R(u,u,x,Jx)=T(u,u,x,Jx),
	\end{equation*}
	then $R=T$.
\end{proposition}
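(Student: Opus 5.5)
The plan is to reduce the statement to a linear-algebra fact about a single antisymmetric bilinear form, using polarization twice. I read the hypothesis as $R(u,u,x,Jx)=T(u,u,x,Jx)$ for all $u,x\in V$, the letter $v$ in the statement playing the role of $x$. Set $D=R-T$. Since properties a)–e) are linear conditions, $D$ satisfies all of them as well, and the hypothesis becomes $D(u,u,x,Jx)=0$ for all $u,x\in V$. It therefore suffices to show that a tensor $D$ with these symmetries and this vanishing property is identically zero.

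\emph{First polarization, in the first pair of arguments.} Fix $x$ and consider the bilinear form $(u,w)\mapsto D(u,w,x,Jx)$. By b) it is symmetric, and by hypothesis its quadratic form vanishes. Replacing $u$ by $u+w$ gives
\begin{equation*}
2D(u,w,x,Jx)=D(u+w,u+w,x,Jx)-D(u,u,x,Jx)-D(w,w,x,Jx)=0 .
\end{equation*}
Hence $D(u,w,x,Jx)=0$ for all $u,w,x\in V$.

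\emph{Second polarization, in the last pair of arguments.} Now fix $u,w$ and set $B(x,y)=D(u,w,x,y)$. By a), $B$ is antisymmetric, and by e), $B(x,Jy)=-B(Jx,y)$. We know $B(x,Jx)=0$ for every $x$. Replacing $x$ by $x+y$ and expanding gives
\begin{equation*}
0=B(x,Jy)+B(y,Jx).
\end{equation*}
Property e) turns the first term into $-B(Jx,y)$, and antisymmetry turns the second into $-B(Jx,y)$ as well. We conclude $-2B(Jx,y)=0$ for all $x,y$. Since $J^{2}=-\mathrm{Id}$, the map $J$ is invertible, so $B\equiv 0$. As $u,w$ were arbitrary, $D=0$, that is, $R=T$.

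I do not expect a genuine obstacle. The only point needing care is the second polarization: one must check that the cross terms combine with the same sign rather than cancel. They do, because e) and antisymmetry each convert a term into $-B(Jx,y)$. Properties c) and d) are not needed, although c) gives e) once combined with a).
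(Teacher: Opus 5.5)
Your proof is correct and is essentially the paper's argument: set $W=R-T$, polarize in the first pair using b), then polarize in the last pair using a) and e) to get $2W(u,w,x,Jy)=0$, and conclude by invertibility of $J$, which is the paper's step of replacing $y$ by $Jy$. As a minor aside on your side remark, c) alone already yields e) by substituting $Jx_{4}$ for $x_{4}$, so a) is not needed for that implication.
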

\begin{proof}
	We define $W:=R-T$, which has the same symmetries as $R$ and $T$. By hypothesis $W(u,u,x,Jx)=0$, for all $u,x\in V^{2n}$.
	
	For any $u,v,x\in V^{2n}$, we have
	\begin{equation*}
		0=W(u+v,u+v,x,Jx)=W(u,u,x,Jx)+W(v,v,x,Jx)+2W(u,v,x,Jx),
	\end{equation*}
	hence $W(u,v,x,Jx)=0$.
	
	On the other hand, for any $u,v,x,y\in V^{2n}$, we have
	\begin{equation*}
		\begin{split}
			0 &= W(u,v,x+y,Jx+Jy) = W(u,v,x,Jx)+W(u,v,y,Jy)\\
			&\quad +W(u,v,x,Jy)+W(u,v,y,Jx)= 2W(u,v,x,Jy),
		\end{split}
	\end{equation*}
	and consequently, $W(u,v,x,Jy)=0$. Interchanging $y$ for $Jy$ we obtain
	\begin{equation*}
		W(u,v,x,y)=0,
	\end{equation*}  
	as desired.
\end{proof}

\section{The complex Tachibana--Ricci tensor and K\"ahler--Einstein manifolds} \label{sec:complexRTach}

In analogy with the classical Tachibana--Ricci tensor, on a K\"ahler manifold we define the complex Tachibana--Ricci tensor as follows.

\begin{definition} Given a K\"ahler manifold $(M^{2n},g,J)$, the complex Tachibana--Ricci tensor is defined as the $(0,4)$-tensor on $M$ given by
	\begin{equation}
		\begin{split}
			&Q^{c}(g,S)(X_{1},X_{2};X,Y)=((X\wedge^{c}_{g}Y)\cdot S)(X_{1},X_{2})=\\
			&-S((X\wedge^{c}_{g}Y)X_{1},X_{2})-S(X_{1},(X\wedge^{c}_{g}Y)X_{2}),
		\end{split}
	\end{equation}
	for any vector fields $X_{1},X_{2},X,Y \in \mathfrak{X}(M)$.
\end{definition}

\begin{remark}
	The choice of notation $Q^{c}(g,\operatorname{S})$ for the complex Tachibana--Ricci tensor is due to its analogy with the classical Tachibana--Ricci tensor for a Riemannian manifold \cite{Olszak}.
\end{remark}

The complex Tachibana--Ricci tensor can act as the simplest $(0,4)$-tensor with the same algebraic symmetries and the same properties with respect to $J$ as $R\cdot S$. Moreover, taking into account (\ref{EMC}), for each $p\in M$ and any $v,w,x,y\in T_{p}M$, it is immediate to obtain the following relation between the complex Tachibana--Ricci tensor and its classical counterpart,
\begin{equation} \label{TC}
	\begin{split}
		&Q^{c}(g,S)(v,w;x,y)=-S((x\wedge^{c}_{g}y)v,w)-S(v,(x\wedge^{c}_{g}y)w)=\\
		&-S((x\wedge y)v,w)-S(v,(x\wedge y)w)-S((Jx\wedge Jy)v,w)-S(v,(Jx\wedge Jy)w)\\
		&+2g(x,Jy)S(Jv,w)+2g(x,Jy)S(v,Jw)=\\
		&Q(g,S)(v,w;x,y)+Q(g,S)(v,w;Jx,Jy).
	\end{split}
\end{equation}

Given $p\in \mathcal{U}$ and two planes in $T_{p}M$, $\pi=v\wedge w$, $\bar{\pi}=x\wedge y$, it is easy to verify that $Q(g,S)(v,w;x,y)$ and $Q^{c}(g,S)(v,w;x,y)$ do not depend on the choice of bases for $\pi$ and $\bar{\pi}$, so we can simply write $Q(g,S)(\pi;\bar{\pi})$ and $Q^{c}(g,S)(\pi;\bar{\pi})$. Taking this notation into account, (\ref{TC}) reads
\begin{equation*}
	Q^{c}(g,S)(\pi;\bar{\pi})=Q(g,S)(\pi;\bar{\pi})+Q(g,S)(\pi;J\bar{\pi}),
\end{equation*}
where $J\bar{\pi}=Jx\wedge Jy$. It should also be noted that when the second of the two planes is holomorphic, that is, invariant under the action of $J$, the Tachibana--Ricci tensor and the complex Tachibana--Ricci tensor are proportional. Indeed, it is easy to verify that
\begin{equation} 
	Q^{c}(g,S)(\pi;\bar{\pi}^{h})=2Q(g,S)(\pi;\bar{\pi}^{h}).
\end{equation}
Moreover, given a direction $d$ generated by a vector $u\in T_{p}M$ and a holomorphic plane $\bar{\pi}^{h}$, we also have the following relation between both tensors
\begin{equation} \label{RH}
	Q^{c}(g,S)(u,u;\bar{\pi}^{h})=2Q(g,S)(u,u;\bar{\pi}^{h}),
\end{equation}
where we use the superscript $^{h}$ to denote holomorphic planes.

Note that if the first of the planes is holomorphic we have
\begin{equation}
	Q^{c}(g,S)(\pi^{h};\bar{\pi})=0.
\end{equation}

As a first result in this section we provide the following characterization for K\"ahler--Einstein manifolds.

\begin{theorem} \label{thm:Einsteinchac}
	A K\"ahler manifold $(M,g,J)$ is K\"ahler--Einstein if and only if $Q^{c}(g,S)(u,u;x,Jx)=0$ for any $u,x\in T_{p}M$.
\end{theorem}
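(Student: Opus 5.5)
Both directions are pointwise algebraic statements, so I will work at a fixed $p\in M$ and use only the symmetries of $S$ from Lemma~\ref{lem:simSJ}.

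\emph{Einstein implies the condition.} If $S=\lambda g$, then the two terms in the definition of $Q^{c}(g,S)$ are $-\lambda\, g((x\wedge^{c}_{g}y)v,w)$ and $-\lambda\, g(v,(x\wedge^{c}_{g}y)w)$. These cancel because $x\wedge^{c}_{g}y$ is $g$-skew. Indeed, $x\wedge_g y$ and $Jx\wedge_g Jy$ are skew, and $Z\mapsto -2g(Jx,y)JZ$ is skew since $J$ is orthogonal with $J^{2}=-1$. Hence $Q^{c}(g,S)$ vanishes identically, and in particular $Q^{c}(g,S)(u,u;x,Jx)=0$.

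\emph{The condition implies Einstein.} First I reduce to the real Tachibana--Ricci tensor. The plane $x\wedge Jx$ is holomorphic, so (\ref{RH}) gives $Q^{c}(g,S)(u,u;x,Jx)=2Q(g,S)(u,u;x,Jx)$. Next I compute this directly, using $(x\wedge Jx)u=g(Jx,u)x-g(x,u)Jx$:
\begin{equation*}
Q(g,S)(u,u;x,Jx)=-2\bigl(g(Jx,u)S(x,u)-g(x,u)S(Jx,u)\bigr).
\end{equation*}
The hypothesis is therefore
\begin{equation*}
g(Jx,u)S(x,u)=g(x,u)S(Jx,u)\quad\text{for all } u,x.
\end{equation*}

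To extract $S=\lambda g$, let $A$ be the Ricci operator, $S(X,Y)=g(AX,Y)$. By Lemma~\ref{lem:simSJ}, $A$ is self-adjoint and commutes with $J$, so it diagonalizes with $J$-invariant eigenspaces. Take eigenvectors $e,f$ with eigenvalues $\lambda\neq\mu$ and eigenspaces orthogonal to each other, and set $x=e$, $u=Je+f$. Then $g(x,u)=g(e,Je)+g(e,f)=0$, so the right-hand side vanishes. On the left, $g(Jx,u)=g(Je,Je)=|e|^{2}$ and $S(x,u)=\lambda g(e,Je+f)=0$, so the left-hand side vanishes too. This choice gives nothing.

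Instead I take $x=e+f$ and $u=Je+f$ with $|e|=|f|=1$. Then $Jx=Je+Jf$, and orthogonality of distinct eigenspaces together with $g(e,Je)=0$ gives
\begin{equation*}
g(Jx,u)=1+g(Jf,f)=1,\qquad S(x,u)=\lambda g(e,Je)+\mu=\mu,
\end{equation*}
\begin{equation*}
g(x,u)=1,\qquad S(Jx,u)=g(\lambda Je+\mu Jf,\,Je+f)=\lambda .
\end{equation*}
The identity then reads $\mu=\lambda$, a contradiction. So $A$ has a single eigenvalue at $p$, i.e.\ $S=\lambda(p)g$. In real dimension $2n\geq 4$, Schur's lemma makes $\lambda$ constant, and this matches the normalization $\lambda=\tfrac{\widetilde c}{2}(n+1)$. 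In complex dimension one the statement is degenerate, since $S=Kg$ automatically and the condition holds trivially.

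The only delicate step is this choice of test vectors. The first natural try degenerates, as shown above, and the cross terms must be checked carefully against the $J$-invariance of the eigenspaces.

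Alternatively, the reverse direction follows from Proposition~\ref{prop:auxalg}. Taking $T=0$ requires that $Q^{c}(g,S)$ satisfy its hypotheses a)--e); a) and b) are clear, and c)--e) follow from Lemma~\ref{lem:simSJ} together with the fact that $X\wedge^{c}_{g}Y$ commutes with $J$ and satisfies $JX\wedge^{c}_{g}Y=-X\wedge^{c}_{g}JY$. This gives $Q^{c}(g,S)=0$, and hence $Q(g,S)(v,v;x,y)+Q(g,S)(v,v;Jx,Jy)=0$ by (\ref{TC}). To finish from there I would still need a vanishing argument like the one above.
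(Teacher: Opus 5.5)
Your proof is correct. It reaches the key identity $g(Jx,u)S(x,u)=g(x,u)S(Jx,u)$ exactly as the paper does, via (\ref{RH}), but it extracts $S=\lambda g$ from that identity by a different route.

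\emph{The paper's route.} It decomposes $u=\alpha x+\beta Jx+w$ to show that $S$ is diagonal in every $J$-adapted orthonormal basis $\{e_i,Je_i\}$. To compare the diagonal entries, it then invokes Proposition~\ref{prop:auxalg} to upgrade the hypothesis to $Q^{c}(g,S)\equiv 0$, and evaluates $Q^{c}(g,S)(e_i,e_j;e_i,e_j)=\lambda_j-\lambda_i$.

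\emph{Your route.} You diagonalize the Ricci operator, which is self-adjoint and commutes with $J$, so its eigenspaces are orthogonal and $J$-invariant. A single test pair $x=e+f$, $u=Je+f$ then forces any two eigenvalues to coincide, and your cross-term computations check out.

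\emph{What each buys.} Your argument avoids Proposition~\ref{prop:auxalg} entirely and works directly with the hypothesis on holomorphic planes, never extending it to arbitrary planes. The paper's argument passes through the full vanishing $Q^{c}(g,S)\equiv 0$ and uses the algebraic proposition it set up. Your Schur's lemma step (constancy of $\lambda$ for $n\geq 2$ on a connected manifold) and your remark that the equivalence degenerates in complex dimension one are both points the paper passes over.

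Your alternative via Proposition~\ref{prop:auxalg} is closer to complete than you suggest. Once $Q^{c}(g,S)\equiv 0$, evaluate at $(e,f;e,f)$ for unit eigenvectors $e,f$ with eigenvalues $\lambda\neq\mu$. Since $(e\wedge^{c}_{g}f)e=-f$ and $(e\wedge^{c}_{g}f)f=e$, this gives $\mu-\lambda=0$ immediately, which is essentially the paper's last step.
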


\begin{proof}
	The necessary condition follows immediately, since if $M$ is K\"ahler--Einstein, $\operatorname{Ric}=\lambda g$, so $Q^{c}(g,S)=Q^{c}(g,\lambda g)=0$.
	
	For the sufficient condition, using the properties of $S$ and (\ref{RH}) we have
	\begin{equation*}
		0=Q^{c}(g,S)(u,u;x,Jx)=-2S((x\wedge^{c}Jx)u,u),
	\end{equation*}
	and thus,
	\begin{equation*} \label{ref1}
		g(Jx,u)S(x,u)=g(x,u)S(Jx,u).
	\end{equation*}
	
	We decompose $u$ into its projection onto the plane spanned by $x$ and $Jx$ and its orthogonal projection, $u=\alpha x+\beta Jx+w$, and we obtain
	\begin{equation*}
		\beta S(x,w)=\alpha S(Jx,w).
	\end{equation*}
	
	In particular, if $\beta=0$ and $\alpha\neq 0$,
	\begin{equation*}
		S(Jx,w)=0.
	\end{equation*}
	
	Consider an orthonormal basis of $T_{p}M$, $\{e_{i},Je_{i}\}_{i=1,\ldots,n}$. Taking $x=e_{i}$, $w=Je_{j}$, with $i\neq j$, we obtain
	\begin{equation*}
		S(Je_{i},Je_{j})=S(e_{i},e_{j})=0.
	\end{equation*}
	
	And the same way, taking $x=e_{i}$, $w=e_{j}$, with $i\neq j$, we obtain
	\begin{equation*}
		S(Je_{i},e_{j})=S(e_{i},Je_{j})=0.
	\end{equation*}
	
	From the properties of $S$ we know that $S(e_{i},Je_{i})=0$. Therefore, the only non-zero terms will be the diagonal elements; that is, the terms $S(e_{i},e_{i})$ and $S(Je_{i},Je_{i})$, so that
	\begin{equation} \label{ref3}
		S=\begin{pmatrix}
			D & 0 \\
			0 & D
		\end{pmatrix},
	\end{equation}
	where $D$ is a diagonal matrix with entries $\lambda_i = S(e_i,e_i)=S(Je_{i},Je_{i})$.
	
	Now applying Proposition~\ref{prop:auxalg} to the complex Tachibana--Ricci tensor, taking into account the hypothesis of the statement, we have that $Q^{c}(g,S)\equiv 0$, so in particular
	\begin{equation*}
		\begin{split}
			0 &= Q^{c}(g,S)(e_{i},e_{j};e_{i},e_{j}) \\
			&= -S((e_{i}\wedge^{c} e_{j})e_{i},e_{j})-S(e_{i},(e_{i}\wedge^{c} e_{j})e_{j})\\
			&= +S(e_{j},e_{j})-S(e_{i},e_{i})=\lambda_{j}-\lambda_{i}.
		\end{split}
	\end{equation*}
	
	Therefore, $\lambda_{i}=\lambda_{j}$ for all $i\neq j$, $i,j=1,\ldots, n$, and from (\ref{ref3}) we obtain that $S=\lambda I$ for the given orthonormal basis. Consequently, $S=\lambda g$, as desired.
\end{proof}

\medskip

Our next goal is to give a geometric interpretation of the tensor $Q^{c}(g,S)$ in a similar way to that given in \cite{JHSV} for $Q(g,S)$ in the Riemmanian case.

As a preliminary step, we recall the geometric interpretation of the metric endomorphism applied to a vector, focusing on the case of K\"ahler manifolds. In this sense, taking into account \cite{AAC}, the vector $(x\wedge_{g} y)$ measures the first-order change of a vector $z$ after an infinitesimal rotation in the plane $\pi =x\wedge y$ at the point $p$.

So, if we take $p\in \mathcal{U}$, a direction $d$ generated by a vector $v\in T_{p}M$, and a plane $\pi =x\wedge y$, where $\{x,y\}$ are orthonormal, infinitesimally rotating the projection of $v$ onto $\pi$ we obtain
\begin{equation*}
	v'=v+\varepsilon(x\wedge_{g} y)v+\mathcal{O}(\varepsilon^{2}).
\end{equation*}

Infinitesimally rotating again the projection of this new vector onto $J\pi=Jx\wedge Jy$ we obtain
\begin{equation*}
	v''=v+\varepsilon(x\wedge_{g} y)v+\varepsilon(Jx\wedge_{g} Jy)v+\mathcal{O}(\varepsilon^{2}).
\end{equation*}

Comparing the Ricci curvatures $\operatorname{Ric}(v)$ and $\operatorname{Ric}(v'')$, we have
\begin{equation}
	\begin{split}
		\operatorname{Ric}(v'')=&\operatorname{Ric}(v)-\varepsilon Q(g,S)(v,v;x,y)-\varepsilon Q(g,S)(v,v,Jx,Jy)+\mathcal{O}(\varepsilon^{2})\\
		=&\operatorname{Ric}(v)-\varepsilon Q^{c}(g,S)(v,v;x,y)+\mathcal{O}(\varepsilon^{2}).
	\end{split}
\end{equation}

\begin{figure}
	\centering
	\includegraphics[width=10cm]{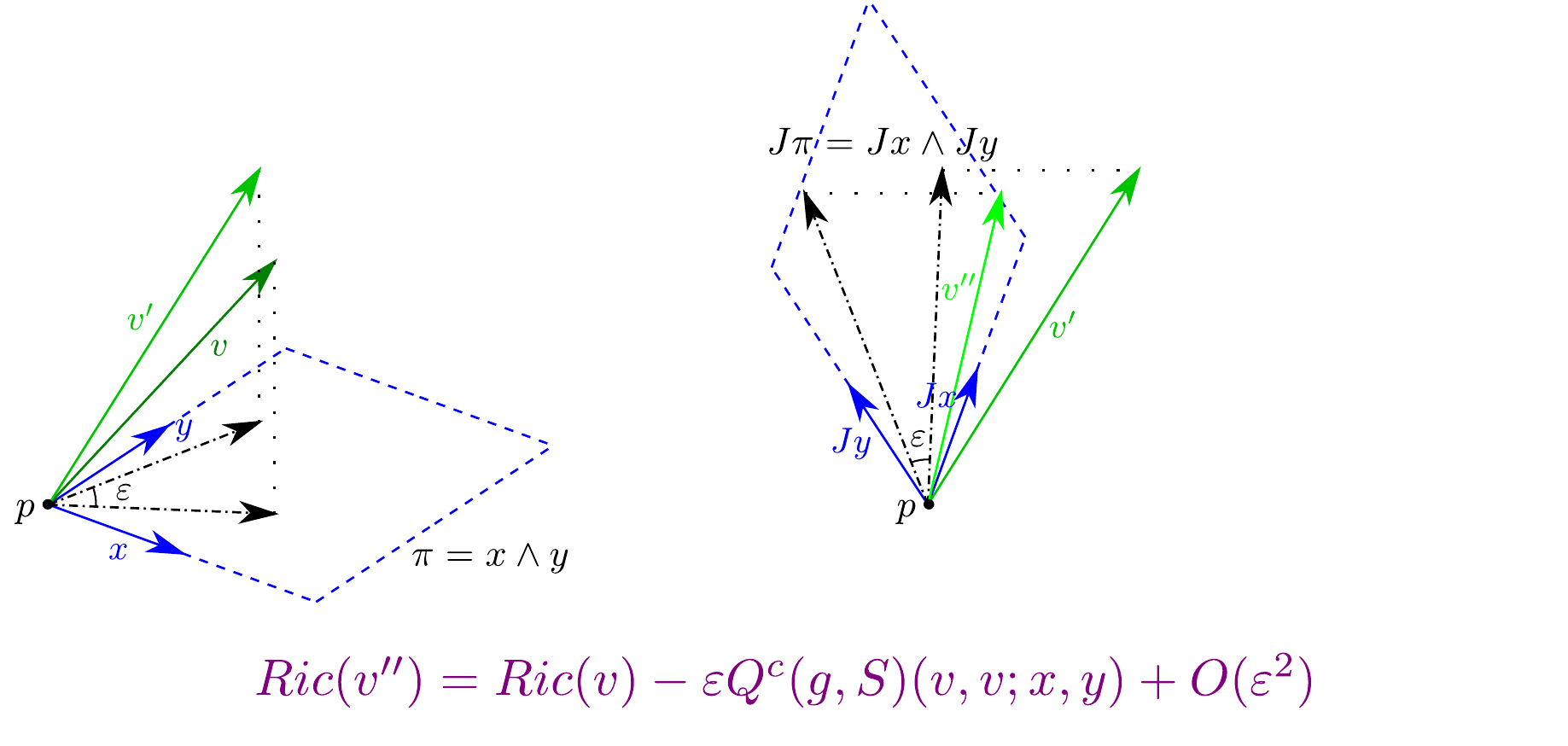}
	\caption{Geometric interpretation of the complex Tachibana--Ricci tensor}
\end{figure}
\vspace*{-0.2cm}

In conclusion, the components of $Q^{c}(g,S)(v,v;x,y)$ measure the change of the Ricci curvature, $\operatorname{Ric}(v)$, after operations involving infinitesimal rotations in $\pi$ and $J\pi$ up to second order.

\section{Ricci-paralell and Ricci-semisymmetric K\"ahler manifolds} \label{sec:Ricciparsem}

As we recalled in the introduction, Ricci-parallel spaces, that is, those for which $\nabla S=0$, are a natural generalization of K\"ahler--Einstein spaces. Our next result for K\"ahler manifolds shows that in the previous characterization we can consider only holomorphic planes.

\begin{theorem} \label{thm:Riccipar}
	A K\"ahler manifold is Ricci-parallel if and only if $(\nabla_{X+JX}S)(U,U)=0$, for any vector fields $X,U\in \mathfrak{X}(M)$.
\end{theorem}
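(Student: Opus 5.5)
The plan is to exploit two facts: $\nabla_{X}S$ depends linearly (indeed tensorially) on the direction $X$, and $J$ is an isomorphism of each tangent space. The necessary condition is immediate: if $\nabla S=0$, then $\nabla_{X+JX}S=0$ for every $X$, so $(\nabla_{X+JX}S)(U,U)=0$. Everything therefore rests on the sufficient condition.

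For the converse, I will first write $\nabla_{X+JX}S=\nabla_{X}S+\nabla_{JX}S$, which holds because $\nabla_{X}S$ is $C^\infty(M)$-linear in $X$. The hypothesis then reads
\begin{equation*}
	(\nabla_{X}S)(U,U)+(\nabla_{JX}S)(U,U)=0
\end{equation*}
for all $X,U\in\mathfrak{X}(M)$. The key step is to apply the hypothesis again with $X$ replaced by $JX$, which is allowed since it holds for every vector field. Using $J^{2}=-\mathrm{Id}$, this gives
\begin{equation*}
	(\nabla_{JX}S)(U,U)-(\nabla_{X}S)(U,U)=0.
\end{equation*}
Adding the two identities yields $(\nabla_{JX}S)(U,U)=0$, and subtracting them yields $(\nabla_{X}S)(U,U)=0$, for all $X,U$. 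Either conclusion is enough, since $J$ is surjective on each $T_pM$.

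It then remains to pass from the diagonal values $(\nabla_{X}S)(U,U)$ to the full tensor. Because $S$ is symmetric (Lemma~\ref{lem:simSJ} a)) and $\nabla$ commutes with contractions, $\nabla_{X}S$ is again a symmetric $(0,2)$-tensor. Polarizing $0=(\nabla_{X}S)(U+V,U+V)$, exactly as in the first step of the proof of Proposition~\ref{prop:auxalg}, gives $(\nabla_{X}S)(U,V)=0$ for all $U,V$. Hence $\nabla S=0$.

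I do not expect a genuine obstacle here. The only point needing care is to state explicitly that substituting $JX$ for $X$ is legitimate and that linearity in the direction slot holds pointwise. No use of the closedness of the Ricci form or of $\nabla J=0$ beyond $J^{2}=-\mathrm{Id}$ seems necessary.
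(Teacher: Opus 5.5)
Your proof is correct and uses the same ingredients as the paper's: linearity of $\nabla_X S$ in the direction $X$, the substitution $X\mapsto JX$ with $J^{2}=-\mathrm{Id}$ to separate $\nabla_X S$ from $\nabla_{JX}S$, and polarization via the symmetry of $\nabla_X S$. The only difference is the order. The paper first polarizes to get $(\nabla_{u+Ju}S)(x,y)=0$ and then splits via $u\leftrightarrow Ju$, adding a redundant step with $x+Jy$. You split on the diagonal values first and then polarize, which is slightly cleaner.
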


\begin{proof}
	At each point $p\in M$ we define the tensor
	\begin{equation*}
		T_{p}(x,y,z):=(\nabla_{z+Jz}S)(x,y),
	\end{equation*}
	for any $x,y,z \in T_{p}M$. This tensor satisfies the following properties, derived from the properties of the Ricci tensor:
	\begin{itemize}
		\item [i)] $T_{p}(x,y,z)=T_{p}(y,x,z)$,
		\item [ii)] $T_{p}(Jx,Jy,z)=T_{p}(x,y,z)$,
	\end{itemize}
	for any $x,y,z \in T_{p}M$.
	
	On the other hand, from the linearity of the covariant derivative and the hypothesis we have that
	\begin{equation*}
		0=T_{p}(x+y,x+y,u+Ju)=2T_{p}(x,y,u+Ju),
	\end{equation*}
	hence $T_{p}(x,y,u)=-T_{p}(x,y,Ju)$. Interchanging $u$ and $Ju$ we obtain $T_{p}(x,y,Ju)=T_{p}(x,y,u)$, so necessarily $T_{p}(x,y,u)=T_{p}(x,y,Ju)=0$.
	
	Similarly, computing now $T_{p}(x+Jy,x+Jy,u+Ju)$, and again interchanging $u$ and $Ju$ in the resulting expression, we arrive at $T_{p}(x,Jy,u)=T_{p}(x,Jy,Ju)=0$.
	
	Since the above holds for any $x,y,u \in T_{p}M$, we have $\nabla S=0$.
	
	The converse is immediate.
\end{proof}

\medskip

The integrability condition of $\nabla S=0$ is $R\cdot S=0$. In the same way as for Ricci-parallel K\"ahler spaces, Ricci-semisymmetric K\"ahler spaces can be characterized in terms of holomorphic planes, as shown in the following characterization result.

\begin{theorem} \label{thm:Riccisem}
	A K\"ahler manifold $(M,g,J)$ is Ricci-semisymmetric if and only if $(R\cdot \operatorname{S})(u,u;x,Jx)=0$, for any $u,x\in T_{p}M$.
\end{theorem}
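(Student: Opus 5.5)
The forward implication is immediate: if $R\cdot S=0$ then in particular $(R\cdot S)(u,u;x,Jx)=0$ for all $u,x\in T_{p}M$. For the converse, I will fix $p\in M$ and apply Proposition~\ref{prop:auxalg} on $V=T_{p}M$ with the two tensors $R\cdot S$ and $T\equiv 0$. The zero tensor trivially satisfies properties a)--e). The hypothesis gives exactly $(R\cdot S)(u,u;x,Jx)=T(u,u;x,Jx)$ for all $u,x$, so the conclusion $R\cdot S=0$ at $p$ follows once the symmetries are checked.

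The main work is verifying that $(R\cdot S)(x_{1},x_{2};x_{3},x_{4})$ satisfies a)--e) of Proposition~\ref{prop:auxalg}. Property a), antisymmetry in the last pair, follows from $R(X,Y)=-R(Y,X)$. Property b), symmetry in the first pair, follows directly from the definition and the symmetry of $S$ in Lemma~\ref{lem:simSJ} a). Property c) is precisely Lemma~\ref{lem:simRSJ}.

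For d), I will use that $R(X,Y)$ commutes with $J$, since $\nabla J=0$ (\cite[Prop. 4.5, Chap. IX]{KN_1969}). Then
\[
(R\cdot S)(x_{1},Jx_{2};x,y)=-S(R(x,y)x_{1},Jx_{2})-S(x_{1},JR(x,y)x_{2}).
\]
Applying Lemma~\ref{lem:simSJ} d) to move $J$ to the other slot in each term gives $S(JR(x,y)x_{1},x_{2})+S(Jx_{1},R(x,y)x_{2})$. This equals $-(R\cdot S)(Jx_{1},x_{2};x,y)$, using $R(x,y)Jx_{1}=JR(x,y)x_{1}$ once more.

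For e), I will use that $R(x,Jy)=-R(Jx,y)$, which follows from $g(R(x,y)z,w)$ being $J$-invariant in the first pair, i.e.\ $R(Jx,Jy)=R(x,y)$, after replacing $y$ by $Jy$. Property e) is then immediate from the definition.

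I expect the only delicate point to be the sign bookkeeping in d) and e), namely making sure the $J$-compatibilities of both $R$ and $S$ combine to give the minus signs required rather than plus signs. With these symmetries established, Proposition~\ref{prop:auxalg} yields $R\cdot S=0$ at every $p$, so $M$ is Ricci-semisymmetric.
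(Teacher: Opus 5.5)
Your proposal is correct and follows the paper's proof: apply Proposition~\ref{prop:auxalg} pointwise to $R\cdot S$ and the zero tensor. Your explicit verification of properties a)--e) fills in what the paper leaves implicit; in particular, d) comes from $R(x,y)J=JR(x,y)$ together with Lemma~\ref{lem:simSJ}, and e) from $R(x,Jy)=-R(Jx,y)$.
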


\begin{proof}
	The necessary condition is immediate. The sufficient condition follows by defining the tensor $T(x_{1},x_{2},x,y):=(R\cdot S)(x_{1},x_{2};x,y)$ and applying Proposition~\ref{prop:auxalg} to this tensor.
\end{proof}

\section{Holomorphically Ricci-pseudosymmetric K\"ahler manifolds} \label{sec:HRPS}

It is known that every K\"ahler manifold $(M^{2n},g,J)$, of real dimension $2n>4$, pseudosymmetric in the sense of Deszcz, is also Ricci-pseudosymmetric in the sense of Deszcz \cite{DDDVY}. However, the converse is not true \cite{JHSV}. Moreover, the class of Ricci-pseudosymmetric manifolds is an extension of the class of Ricci-semisymmetric manifolds. Evidently, every Ricci-semisymmetric manifold is Ricci-pseudosymmetric. However, there exist various examples of Ricci-pseudosymmetric manifolds wich are not pseudosymmetric \cite{Mut}. On the other hand, as indicated in the introduction, the tensors $R\cdot S$ and $Q(g,S)$ do not possess the same symmetries and properties with respect to the structure operator $J$. For this reason, for K\"ahler manifolds it seems reasonable to consider the alternative version of Ricci pseudosymmetry based on the complex Tachibana--Ricci tensor.

\begin{definition}
	A K\"ahler manifold $(M^{2n},g,J)$, $n\geq 2$ is said to be \textit{holomorphically Ricci-pseudosymmetric} when the tensor $R\cdot S$ satisfies
	\begin{equation}
		R\cdot S=f_{S} \hspace{0.1cm}Q^{c}(g,S),
	\end{equation}
	where $f_{S}\in \mathcal{C}^{\infty}(M)$.
\end{definition}

Although the tensors involved in both definitions of Ricci pseudosymmetry are different, it is possible to characterize holomorphic Ricci pseudosymmetry in terms of the Ricci curvature of Deszcz as defined in (\ref{RCD}), obtaining results similar to those known for the Riemannian case, \cite[Theorems 7 and 10]{JHSV}. Using Proposition~\ref{prop:auxalg}, we are in a position to give the following result.

\begin{theorem} \label{thm:HRpseud}
	A K\"ahler manifold $(M^{2n},g,J)$, $n\geq 2$, is holomorphically Ricci-pseudosymmetric if and only if for every $p\in \mathcal{U}$, every direction $d$ generated by a vector $v\in T_{p}M$, and every holomorphic plane $\pi^{h}$ contained in $T_{p}M$, such that $d$ is curvature dependent on $\pi^{h}$, the Ricci curvature of Deszcz, $L_{S}(p,d,\pi^{h})$, is independent of the direction and the plane; that is, $L_{S}(p,d,\pi^{h})=L_{S}(p)$.
\end{theorem}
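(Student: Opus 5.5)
The plan is to prove both implications by combining relation (\ref{RH}) with Proposition~\ref{prop:auxalg} applied to the pair $R\cdot S$ and $f_S\,Q^{c}(g,S)$.

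For the necessary condition, assume $R\cdot S=f_{S}\,Q^{c}(g,S)$. Take $p\in\mathcal{U}$, a direction $d$ spanned by $v$, and a holomorphic plane $\pi^{h}=x\wedge Jx$ on which $d$ is curvature dependent, i.e.\ $Q(g,S)(v,v;x,Jx)\neq 0$. By (\ref{RH}),
\begin{equation*}
(R\cdot S)(v,v;x,Jx)=f_{S}\,Q^{c}(g,S)(v,v;x,Jx)=2f_{S}\,Q(g,S)(v,v;x,Jx).
\end{equation*}
Dividing by $Q(g,S)(v,v;x,Jx)$ gives $L_{S}(p,d,\pi^{h})=2f_{S}(p)$. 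This depends only on $p$, so we set $L_{S}(p):=2f_{S}(p)$.

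For the sufficient condition, assume $L_{S}(p,d,\pi^{h})=L_{S}(p)$ for all admissible $d,\pi^{h}$, and put $f:=\tfrac12 L_{S}$. Consider $T:=f\,Q^{c}(g,S)$ at $p$. The first step is to check that both $R\cdot S$ (by Lemma~\ref{lem:simRSJ} together with the usual Riemannian symmetries and Lemma~\ref{lem:simSJ}) and $Q^{c}(g,S)$ satisfy hypotheses a)--e) of Proposition~\ref{prop:auxalg}. For $Q^{c}(g,S)$ this follows because $x\wedge^{c}_{g}y$ commutes with $J$ and satisfies $Jx\wedge^{c}_{g}Jy=x\wedge^{c}_{g}y$ and $x\wedge^{c}_{g}Jy=-Jx\wedge^{c}_{g}y$; properties d) and e) then come from $S(X,JY)=-S(JX,Y)$. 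Next, for every $u,x$ we show
\begin{equation*}
(R\cdot S)(u,u;x,Jx)=f\,Q^{c}(g,S)(u,u;x,Jx).
\end{equation*}
When $Q(g,S)(u,u;x,Jx)\neq0$, this is the hypothesis combined with (\ref{RH}). Proposition~\ref{prop:auxalg} then yields $R\cdot S=f\,Q^{c}(g,S)$ on $\mathcal{U}$. On $M\setminus\mathcal{U}$ we have $Q(g,S)=0$, so $S=\lambda g$ there by \cite[Lemma 4]{JHSV}. Hence $R\cdot S=0$ and $Q^{c}(g,S)=0$ on $M\setminus\mathcal{U}$, and the identity holds with any $f$. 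Smoothness of $f$ follows as in \cite[Theorem 10]{JHSV}, by extending $f$ from the open set where it is determined.

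The main obstacle is the degenerate pairs with $Q(g,S)(u,u;x,Jx)=0$ at $p\in\mathcal{U}$. For these we must show $(R\cdot S)(u,u;x,Jx)=0$. The first thing to try is a density/continuity argument. For fixed $p\in\mathcal{U}$, the map $(u,x)\mapsto Q(g,S)(u,u;x,Jx)$ is a polynomial, so its nonvanishing set is either empty or open and dense. Both sides of the desired identity are polynomial, hence continuous, in $(u,x)$, so equality on a dense set extends to all pairs.

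The delicate point is ruling out that this polynomial vanishes identically while $Q(g,S)\not\equiv0$. If $Q(g,S)(u,u;x,Jx)=0$ for all $u,x$, then by (\ref{RH}) $Q^{c}(g,S)(u,u;x,Jx)=0$. The pointwise argument of Theorem~\ref{thm:Einsteinchac} then gives $S=\lambda g$ at $p$, hence $Q(g,S)=0$ at $p$, contradicting $p\in\mathcal{U}$. Thus the polynomial is not identically zero on $\mathcal{U}$, so its nonvanishing set is open and dense, and the extension by continuity goes through.
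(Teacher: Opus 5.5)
Your proposal is correct and follows the paper's route. Necessity comes from (\ref{RH}) giving $L_S=2f_S$. For sufficiency you extend $(R\cdot S)(u,u;x,Jx)=\tfrac12 L_S\,Q^{c}(g,S)(u,u;x,Jx)$ to non-curvature-dependent pairs by density of the non-vanishing set of a polynomial on $T_pM\times T_pM$, note that both sides vanish at every point where $S=\lambda g$ (a pointwise treatment slightly cleaner than the paper's interior/boundary split), and conclude with Proposition~\ref{prop:auxalg}.

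Your use of the pointwise content of Theorem~\ref{thm:Einsteinchac} to show that $Q(g,S)(u,u;x,Jx)$ is not identically zero at points of $\mathcal{U}$ supplies a justification the paper only asserts. As in the paper, you take for granted rather than prove that $f=\tfrac12 L_S$ extends smoothly from $\mathcal{U}$ to all of $M$.
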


\begin{proof}
	First assume that $M$ is holomorphically Ricci-pseudosymmetric, so in particular
	\begin{equation}
		R\cdot S(V,V;X,JX)=f_{S} \hspace{0.1cm}Q^{c}(g,S)(V,V;X,JX),
	\end{equation}
	for each $V,X\in \mathfrak{X}(M)$, $f_{S}\in \mathcal{C}^{\infty}(M)$ being a differentiable function.
	
	Therefore, taking into account (\ref{RH}) and the definition of Ricci curvature of Deszcz, given $p\in \mathcal{U}$ and any direction $d$, generated by a vector $v\in T_{p}M$, and a holomorphic plane $\pi^{h}=x\wedge Jx$ contained in $T_{p}M$, such that $d$ is curvature dependent on the plane $\pi^{h}$, we have that $L_{S}(p,d,\pi)=2f_{S}(p)$, so that the Ricci curvature of Deszcz is independent of the direction and the plane.
	
	Conversely, suppose now that for any point $p\in \mathcal{U}$, and any pairs consisting of a direction $d$, generated by a vector $v\in T_{p}M$, and a holomorphic plane contained in $T_{p}M$, $\pi^{h}=x\wedge Jx$, such that $d$ is curvature dependent on $\pi^{h}$, its Ricci curvature of Deszcz does not depend on such pairs; that is,
	\begin{equation}
		R\cdot S(v,v;x,Jx)=L_{S}(p) \hspace{0.1cm}Q(g,S)(v,v;x,Jx),
	\end{equation}
	which, taking into account (\ref{RH}), gives
	\begin{equation} \label{T1}
		R\cdot S(v,v;x,Jx)=f_{S}(p) \hspace{0.1cm}Q^{c}(g,S)(v,v;x,Jx),
	\end{equation}
	where $f_{S}=\dfrac{1}{2}L\in \mathcal{C}^{\infty}(\mathcal{U})$.
	
	We claim at this point that the same equality holds for any pair $(d,\pi^{h})$, not necessarily curvature dependent in $T_{p}M$, for all $p\in M$ and any differentiable extension of $f_{S}$ to $M$. Therefore, since $R\cdot S$ and $Q^{c}(g,S)$ satisfy the symmetries of Proposition~\ref{prop:auxalg}, it follows that $R\cdot S=f_{S} \hspace{0.1cm}Q^{c}(g,S)$, so that $M$ is holomorphically Ricci-pseudosymmetric.
	
	It remains to prove the claim. On one hand, in the case $p\in \mathcal{U}$, consider the differentiable function $q\in \mathcal{C}^{\infty}(T_{p}M\times T_{p}M)$ defined by $q(v,w)=Q(g,S)(v,v,w,Jw)$. It is easy to prove that the zero set of $q(v,w)$ does not contain any open subset. Fix a basis $\{e_{i},Je_{i}\}_{i=1,\ldots,n}$, and note that $q(v,w)$ is a non-identically vanishing polynomial in the components of $v$ and $w$. If the direction $d$, generated by a vector $v\in T_{p}M$, is not curvature dependent on $\pi^{h}=x\wedge Jx\subset T_{p}M$, we have that $q(v,x)=0$. It is then possible to choose a sequence of tangent vectors in $T_{p}M$, $\{v_{n}\}_{n}$ and $\{x_{n}\}_{n}$ converging to $v$ and $x$, respectively, such that $q(v_{n},x_{n})\neq 0$, $\forall n\in \mathbb{N}$. Likewise, the direction $d_{n}$, is curvature dependent on the holomorphic plane $x_{n}\wedge Jx_{n}$, for each $n\in \mathbb{N}$. Consequently, we have that
	\begin{equation}
		R\cdot S(v_{n},v_{n};x_{n},Jx_{n})=\dfrac{1}{2}L_{S}(p) \hspace{0.1cm}Q^{c}(g,S)(v_{n},v_{n};x_{n},Jx_{n}),
	\end{equation}
	for all $n\in \mathbb{N}$, so that (\ref{T1}) holds at $p\in \mathcal{U}$ by a continuity argument.
	
	On the other hand, the Tachibana--Ricci tensor vanishes identically on $\operatorname{int}(M\setminus \mathcal{U})$, since this open subset is K\"ahler--Einstein and, therefore, (\ref{T1}) is trivially satisfied for any $f_{S}$. Finally, the equality also follows on $\partial (M\setminus \mathcal{U})$ by continuity, and the claim is proved.
\end{proof}

\begin{remark}
	Note that if $(M^{2n},g,J)$ is holomorphically pseudosymmetric, there exists a differentiable function $f:M\longrightarrow \mathbb{R}$ such that $R\cdot R=f\,Q^{c}(g,R)$. In this case, $(M^{2n},g,J)$ is automatically holomorphically Ricci-pseudosymmetric, and the Ricci curvature of Deszcz $L_{S}$ is equal to the double sectional curvature $L_{R}$ \cite{AAC}.
\end{remark}

\section*{Statments and Declarations}

\subsection*{Funding}
The author declares that no funds, grants, or other support were received during the preparation of this manuscript.

\end{document}